\documentclass[10pt]{amsart}
\setlength{\textwidth}{6.5in}
\setlength{\oddsidemargin}{0pt}
\setlength{\evensidemargin}{0pt}
\setlength{\hoffset}{6pt}
\usepackage[margin=1in]{geometry}
\usepackage{amsmath}
\usepackage{amssymb}
\usepackage{amsthm}
\usepackage{amscd}
\usepackage{color}
\usepackage{hyperref}
\usepackage{url}
\usepackage{mathtools}
\usepackage{graphicx}
\usepackage[all, knot]{xy}

\theoremstyle{plain}
\newtheorem{theorem}{Theorem}
\newtheorem{corollary}[theorem]{Corollary}
\newtheorem{proposition}[theorem]{Proposition}
\newtheorem{lemma}[theorem]{Lemma}

\theoremstyle{definition}
\newtheorem{definition}[theorem]{Definition}

\newcommand{\TC}{\mathbf{TC}}
\newcommand{\LTC}{\mathbf{LTC}}
\newcommand{\tc}{\mathbf{tc}}
\newcommand{\ltc}{\mathbf{ltc}}
\newcommand{\Sch}{\mathbf{Sch}}
\newcommand{\cat}{\mathbf{cat}}
\newcommand{\nil}{\mathbf{nil}}

\begin{document}
\begin{center}
\Large Lusternik-Schnirelmann category and based topological complexities of motion planning
\end{center}

\vspace{0.3in}

\centerline{Yongheng Zhang}

\vspace{0.3in}

%abstract
\centerline{ABSTRACT}

Farber and Rudyak introduced topological complexity $\TC(X)$ of motion planning and its higher analogs $\TC_n(X)$ to measure the complexity of assigning paths to point tuples. Motivated by motion planning where a robotic system starts at the home configuration and possibly comes back after passing through a list of locations, we define three other classes of topological complexities $\LTC_n(X)$, $\ltc_n(X)$ and $\tc_n(X)$.  We will compare these notions and compute the latter for some familiar classes of spaces.\\

\textbf{Keywords:} Lusternik-Schnirelmann category; topological complexity; based loop space\\

% Introduction
\section{Introduction}

Given $n\geq 2$ ordered points in a path-connected space $X$, we can construct a path starting at the first point, successively passing through the others points in order before ending at the last point. We also want such paths to vary continuously with respect to different $n-$tuples of points in $X$. When $X$ is contractible, this can be done globally. But when $X$ is not topologically trivial, one has to find an open covering of $X$ with cardinality greater than one such that a continuous assignment of paths to point tuples is possible over each open set in the covering. Topological complexity $\TC(X)$ \cite{Far1} and its higher analogs $\TC_n(X)$ ($\TC_2(X)=\TC(X)$) \cite{Rud} were introduced to measure the least such cardinality.\\

Robotic motion planning motivated the introduction of topological complexity \cite{Far1}, where $X$ is the configuration space of a robotic system. In some practical situations, a system starts at a home position, then moves to the locations in a prescribed list for task completion. Sometimes, the system also comes back to the home position. This paper introduces topological complexities $\tc_n(X)$ for motion planning which starts at a base point and also the versions $\LTC_n(X)$\footnote{$\LTC_2(X)$ was introduced earlier by My Ismail Mamouni and Derfoufi Younes.} and $\ltc_n(X)$ for systems traversing a loop. We will compare these four versions of topological complexities and show that $\ltc_n(X)$ and $\tc_n(X)$ are directly related to the Lusternik-Schnirelmann category. We will also compute them for some familiar spaces.\\

In this paper, $X$ is a path-connected topological space; $x_0$ is a chosen point in $X$, which denotes the home configuration when $X$ is the configuration space of a robotic system; $I$ is the unit closed interval. We denote the $n$th Cartesian power of $X$ by $X^n$. All maps that will be considered are continuous.\\

\section{The four versions of topological complexities}
Recall the definitions of the following four spaces, which are equipped with the compact open topologies.\\

\begin{equation*}
\begin{array}{llll}
\mbox{Based path space:} & P_{x_0}X &=&\{f: I\rightarrow X\big| f(0)=x_0\}.\\
\mbox{Based loop space:} & \Omega_{x_0}X &=& \{f:I\rightarrow X\big| f(0)=f(1)=x_0\}.\\
\mbox{Free path space:} & PX &=& \{f:I\rightarrow X\big\}.\\
\mbox{Free loop space:} & LX &=& \{f:I\rightarrow X\big| f(0)=f(1)\}.\\
\end{array}
\end{equation*}\\

Let $n$ be a positive integer, then we define the following maps.\\

\begin{equation*}
\begin{array}{crcll}
p_n:&P_{x_0}X&\longrightarrow &X^{ n},&p_n(f)=\left(f(\frac{1}{n}),f(\frac{2}{n}),\cdots,f(\frac{n-1}{n}),f(1)\right).\\
q_n:&\Omega_{x_0}X&\longrightarrow & X^{ n}, & q_n(f)=\left(f(\frac{1}{n+1}),f(\frac{2}{n+1}),\cdots,f(\frac{n}{n+1})\right).\\
\end{array}
\end{equation*}\\

Let $n$ be an integer such that $n\geq 2$. We define two more maps.\\

\begin{equation*}
\begin{array}{crcll}
P_n:&PX&\longrightarrow & X^{ n}, &P_n(f)=\left(f(0),f(\frac{1}{n-1}),f(\frac{2}{n-1}),\cdots,f(\frac{n-2}{n-1}),f(1)\right).\\
Q_n:&LX&\longrightarrow & X^{ n}, &Q_n(f)=\left(f(0),f(\frac{1}{n}),f(\frac{2}{n}),\cdots,f(\frac{n-1}{n})\right).\\
\end{array}
\end{equation*}\\

Since $X$ is path connected, all maps above are surjective. In fact, they are fibrations in the sense introduced in \cite{Ser}.

\begin{proposition}
The maps $p_n$, $q_n$, $P_n$ and $Q_n$ are (Hurewicz) fibrations.
\end{proposition}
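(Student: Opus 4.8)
The plan is to deduce all four statements from the single classical fact that if $i\colon A\hookrightarrow B$ is a cofibration and $Y$ is any space, then the restriction map $Y^{B}\to Y^{A}$ between mapping spaces (with the compact-open topology) is a Hurewicz fibration; this is standard (see, e.g., Spanier's \emph{Algebraic Topology} or May's \emph{A Concise Course in Algebraic Topology}). Two instances will be used: the inclusion into $I$ of a finite subset of $I$, and the inclusion into $S^{1}=I/(0\sim1)$ of the image of a finite subset of $I$ none of whose points is $1$. In both cases the finite set is a subcomplex of an obvious CW structure, hence an NDR pair, hence a cofibration; consequently, evaluation at finitely many distinct points gives Hurewicz fibrations $X^{I}\to X^{k}$ and $X^{S^{1}}\to X^{k}$.

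First I would treat $P_n$ and $Q_n$ directly. The map $P_n$ is exactly the restriction $X^{I}\to X^{\{0,\frac1{n-1},\dots,\frac{n-2}{n-1},1\}}=X^{n}$ induced by the inclusion of the $n$ distinct points $0,\frac1{n-1},\dots,\frac{n-1}{n-1}$ of $I$ (distinct since $n\geq2$); hence $P_n$ is a Hurewicz fibration. For $Q_n$, identify $LX=X^{S^{1}}$ with $S^{1}=I/(0\sim1)$; the points $0,\frac1n,\dots,\frac{n-1}{n}$ descend to $n$ distinct points of $S^{1}$ (none of them is the identified endpoint $1$), and $Q_n$ is the associated restriction map $X^{S^{1}}\to X^{n}$, hence a Hurewicz fibration.

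Next, for the based versions $p_n$ and $q_n$ I would exhibit each as the restriction of one of the fibrations just produced over a subspace of its base. Consider the evaluation fibration $E\colon PX\to X^{n+1}$, $E(f)=\bigl(f(0),f(\tfrac1n),\dots,f(\tfrac{n-1}{n}),f(1)\bigr)$ (a Hurewicz fibration by the first paragraph, since these $n+1$ points of $I$ are distinct). By definition $P_{x_0}X=E^{-1}\bigl(\{x_0\}\times X^{n}\bigr)$, and under the homeomorphism $\{x_0\}\times X^{n}\cong X^{n}$ the map $p_n$ is precisely $E$ restricted to this preimage. Since the restriction of a Hurewicz fibration $p\colon E'\to B$ to $p^{-1}(B')\to B'$ over any subspace $B'\subseteq B$ is again a Hurewicz fibration (equivalently, $p_n$ is the pullback of $E$ along $\{x_0\}\times X^{n}\hookrightarrow X^{n+1}$, and pullbacks of Hurewicz fibrations are Hurewicz fibrations), $p_n$ is a Hurewicz fibration. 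The argument for $q_n$ is identical with $LX$ in place of $PX$: take $\widetilde Q\colon LX\to X^{n+1}$, $\widetilde Q(f)=\bigl(f(0),f(\tfrac1{n+1}),\dots,f(\tfrac{n}{n+1})\bigr)$, a Hurewicz fibration since the $n+1$ points are distinct on $S^{1}$; the key observation is that for $f\in LX$ the condition $f(0)=f(1)=x_0$ reduces to $f(0)=x_0$, so $\Omega_{x_0}X=\widetilde Q^{-1}\bigl(\{x_0\}\times X^{n}\bigr)$ and $q_n$ is again the restriction of $\widetilde Q$ over $\{x_0\}\times X^{n}\cong X^{n}$.

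The only delicate points are the two imported general principles rather than the assembly itself: that a finite subset of $I$ (and its image in $S^{1}$) is a cofibration, and that cofibrations induce fibrations on mapping spaces. One can bypass these citations by writing down an explicit Hurewicz lifting function — given a path in $X^{n}$ one lifts it by reparametrizing a chosen lift on each of the subintervals $[\tfrac{k}{n},\tfrac{k+1}{n}]$ cut out by the evaluation points and concatenating — but this computation is routine and less illuminating, so I would relegate it to a remark rather than make it the proof.
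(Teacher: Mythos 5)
Your proposal is correct, and it reaches the conclusion by a genuinely different (more modular) route than the paper. The paper proves the covering homotopy property by hand: it reduces the lifting problem for $p_n$ to extending a map from $Y\times C$ to $Y\times I^{\times 2}$, where $C$ is the comb $\bigl(\bigcup_j I\times\{\tfrac{j}{n}\}\bigr)\cup(\{0\}\times I)$, and then writes down an explicit extension formula; the basepoint condition is absorbed by declaring $h_0\equiv x_0$ on the $j=0$ tooth, and the other three maps are left to the reader. You instead invoke two standard facts --- that a cofibration $A\hookrightarrow B$ with $B$ compact Hausdorff induces a Hurewicz fibration $X^{B}\to X^{A}$, and that Hurewicz fibrations are stable under pullback --- which dispose of $P_n$ and $Q_n$ immediately and then give $p_n$ and $q_n$ as pullbacks of the unbased evaluation fibrations along $\{x_0\}\times X^{n}\hookrightarrow X^{n+1}$. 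The two arguments have the same geometric core: the paper's explicit $\widetilde H$ is exactly a retraction of $I^{\times 2}$ onto the comb, which is the NDR/cofibration content you cite in packaged form. What your version buys is uniformity (all four cases, and indeed evaluation at any finite point set, follow at once) and freedom from the rather delicate explicit formulas; what it costs is self-containedness, since you import the cofibration-to-fibration adjunction, the fact that a finite subset of $I$ or $S^1$ is a cofibration, pullback stability, and the (routine but worth a sentence) identification of $LX$ with $X^{S^1}$ as spaces with the compact-open topology. All of these are standard and correctly applied, so there is no gap.
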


\begin{proof}[Proof.] Let $Y$ be any topological space. We want to show that these maps satisfy the covering homotopy property with respect to $Y$. We will see that this follows from the existence of map extensions from $Y\times C$, where $C$ is a comb-shaped space embedded in $I^{\times 2}$, to $Y\times I^{\times 2}$. Since the four cases are similar, we will only prove it for $p_n$ and leave the proofs of the other three to the reader for routine check.\\

So let $g: Y\rightarrow P_{x_0}X$ and $h: Y\times I\rightarrow X^{ n}$ be maps such that $(p_n\circ g)(y)=h(y,0)$. These are equivalent to maps $G:Y\times I\rightarrow X$ and $h_1,h_2,\cdots,h_n:Y\times I\rightarrow X$ such that 
\begin{equation}
\label{*}
\tag{$*$}
G(y,0)=x_0\mbox{ and }G(y,\frac{i}{n})=h_i(y,0), i=1,2,\cdots,n.
\end{equation}

We want to show that there is a map $\widetilde{h}:Y\times I\rightarrow P_{x_0}X$ such that $\widetilde{h}(y,0)=g(y)$ and $p_n\circ \widetilde{h}=h$. This is equivalent to a map $\widetilde{H}:Y\times I\times I\rightarrow X$ such that  $\widetilde{H}(y,t,0)=x_0$, $\widetilde{H}(y,0,s)=G(y,s)$ and $\widetilde{H}(y,t,\frac{i}{n})=h_i(y,t)$, $i=1,2,\cdots,n$.\\

Let $C$ be the union of the subspaces $I\times \{\frac{j}{n}\}$, $j=0,1,2,\cdots,n$ and $\{0\}\times I$ of $I^{\times 2}$. The compatibility condition \eqref{*} tells us that such an $\widetilde{H}$ exists on $Y\times C$. It is not difficult to extend this $\widetilde{H}$ to be on the entire $Y\times I^{\times 2}$. An example is given as follows, where $h_0(y,t)$ is defined to be $x_0$.\\

\begin{equation*}
\widetilde{H}(y,t,s)=\left\{
\begin{array}{ll}
 h_{j+1}(y,t_*)&\mbox{ if } \frac{j+1}{n}-\frac{2t}{5n}\leq s\leq \frac{j+1}{n},\\
G(y,\frac{5s-\frac{4j+2}{n}t}{5-4t}) & \mbox{ if } \frac{j}{n}+\frac{2t}{5n}\leq s\leq \frac{j+1}{n}-\frac{2t}{5n},\\
h_j(y,t_{**})&\mbox{ if } \frac{j}{n}\leq s \leq  \frac{j}{n}+\frac{2t}{5n},
\end{array}\right. j=0,1,2,\cdots,n-1,
\end{equation*}

where		
\begin{equation*}
\begin{array}{rcl}
t_*&= & \frac{5ns-5(j+1)+2+t-\sqrt{(5ns-5(j+1)+2+t)^2-4(5ns-5(j+1)+2t)}}{2},\\
t_{**}&=& \frac{-5ns+5j+2+t-\sqrt{(5ns-5j-2-t)^2+4(5ns-5j-2t)}}{2}.
\end{array}
\end{equation*}

\end{proof}

\begin{definition}
Let $p:E\rightarrow B$ be a fibration. The genus of $p$ \cite{Sch}, or the sectional category of $p$ \cite{James}, is defined to be the smallest number of open sets such that these open sets cover $B$ and  when restricted to each open set, $p$ admits a section (we call it a local section). If no such number exists, the genus is defined to be $\infty$. We denote the genus of $p$ by $\Sch(p)$, since it was introduced by Schwarz (\v{S}varc).\\
\end{definition}

\begin{definition} The topological complexity  $\TC(X)$ was defined by Farber \cite{Far1} \cite{Far2} as follows.
$$\TC(X)=\Sch(P_2).$$
For each $n\geq 2$, higher topological complexites  $\TC_nX$ were defined by Rudyak (See Remark 3.2.5 of \cite{Rud}): $$\TC_n(X)=\Sch(P_n).$$
We define three other versions of topological complexities below.

\begin{equation*}
\begin{array}{ll}
\mbox{Loop topological complexities:} & \LTC_n(X)=\Sch(Q_n), n\geq 2.\\
\mbox{Based topological complexities:} & \tc_n(X)=\Sch(p_n), n\geq 1.\\
\mbox{Based loop topological complexities:} & \ltc_n(X)=\Sch(q_n), n\geq 1.\\
\end{array}
\end{equation*}\\

\end{definition}

Similar to $\TC_n$, the other three are also topological invariants.\\

\begin{proposition}
\label{PropositionInv}
Let $Y$ be homotopy equivalent to $X$ and $y_0$ a chosen point in $Y$. Then $\TC_n(X)=\TC_n(Y)$, $\LTC_n(X)=\LTC_n(Y)$, $\tc_n(X)=\tc_n(Y)$ and $\ltc_n(X)=\ltc_n(Y)$.
\end{proposition}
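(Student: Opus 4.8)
The plan is to show that each of the four topological complexities is invariant under homotopy equivalence by exhibiting, for a homotopy equivalence $\phi\colon X\to Y$ with homotopy inverse $\psi\colon Y\to X$, induced maps between the relevant total spaces that are compatible with the fibrations up to homotopy, and then to invoke the standard fact (due to Schwarz) that $\Sch$ depends only on the fibre-homotopy class of the fibration, or more precisely that if two fibrations are dominated by one another in a suitable ``over the base (up to homotopy)'' sense then they have equal genus. Since the four cases are structurally identical, I would carry out the argument in detail only for $\tc_n$, i.e.\ for the fibration $p_n\colon P_{x_0}X\to X^n$, and remark that $\TC_n$, $\LTC_n$, $\ltc_n$ are handled the same way; the case of $\TC_n$ is already classical (Farber, Rudyak) and can simply be cited.

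Here are the steps in order. First, fix $\phi\colon X\to Y$, $\psi\colon Y\to X$ with $\psi\phi\simeq \mathrm{id}_X$ and $\phi\psi\simeq\mathrm{id}_Y$; since $Y$ is path-connected we may assume $y_0=\phi(x_0)$ up to replacing $\phi$ by a homotopic map (using path-connectedness of $Y$ to slide the basepoint, a routine adjustment that also changes $\psi$ only up to homotopy). Second, define $\phi_*\colon P_{x_0}X\to P_{y_0}Y$ by $\phi_*(f)=\phi\circ f$; this is continuous for the compact-open topologies and satisfies $p_n^Y\circ\phi_* = \phi^{\times n}\circ p_n^X$, i.e.\ the square with vertical maps $p_n^X,p_n^Y$ and horizontal maps $\phi_*,\phi^{\times n}$ commutes on the nose. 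Third, do the same with $\psi$ to get $\psi_*\colon P_{y_0}Y\to P_{x_0}X$ (again after adjusting $\psi$ so that $\psi(y_0)=x_0$, or by inserting a fixed path correction), with the analogous commuting square. Fourth, observe that $\psi_*\phi_*$ is the map $f\mapsto (\psi\phi)\circ f$, which is homotopic to $\mathrm{id}_{P_{x_0}X}$ via $f\mapsto H_t\circ f$ where $H_t$ is a basepoint-preserving homotopy $\psi\phi\simeq\mathrm{id}_X$ (here one needs the homotopy to fix $x_0$; this can be arranged since based and free homotopy equivalences agree for well-pointed spaces, or one absorbs the defect into a contractible loop correction), and this homotopy covers the homotopy $\psi^{\times n}\phi^{\times n}\simeq\mathrm{id}_{X^n}$ on the base. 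Fifth, conclude: given a covering of $X^n$ by $k$ open sets over each of which $p_n^X$ has a section, pull back along $(\phi^{\times n})^{-1}$ — no, rather, push forward — to get a covering of $Y^n$; concretely, if $\{U_i\}$ is a genus-$\Sch(p_n^X)=k$ covering with sections $s_i\colon U_i\to P_{x_0}X$, set $V_i=(\psi^{\times n})^{-1}(U_i)$, which is open and covers $Y^n$ (since $\psi^{\times n}$ is continuous and the $U_i$ cover $X^n$, and $\psi^{\times n}$ is surjective as $\psi$ is a homotopy equivalence of path-connected spaces, actually surjectivity is not needed — covering suffices because $\{\psi^{-1}(U_i)\}$ still covers), and build a section of $p_n^Y$ over $V_i$ out of $\phi_*\circ s_i\circ \psi^{\times n}$ corrected by a homotopy using that $p_n^Y$ is a fibration and $p_n^Y\circ(\phi_*\circ s_i\circ\psi^{\times n})=\phi^{\times n}\circ p_n^X\circ s_i\circ\psi^{\times n}=\phi^{\times n}\circ\psi^{\times n}|_{V_i}\simeq \mathrm{id}|_{V_i}$; lifting this homotopy gives the genuine section over $V_i$. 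This shows $\Sch(p_n^Y)\le\Sch(p_n^X)$, and by symmetry equality. Finally, repeat verbatim with $p_n$ replaced by $q_n$, $P_n$, $Q_n$, using $\phi\circ f$ throughout (which preserves the $f(0)=f(1)$ condition for loop spaces and the $f(0)=x_0$ condition for based spaces).

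The main obstacle I anticipate is the basepoint bookkeeping: a homotopy equivalence $X\simeq Y$ need not carry $x_0$ to $y_0$, and the homotopies $\psi\phi\simeq\mathrm{id}_X$ need not be basepoint-preserving, yet the based path and loop spaces $P_{x_0}X$, $\Omega_{x_0}X$ are defined using the basepoint. The cleanest fix is to note that $P_{x_0}X$ and $\Omega_{x_0}X$ have the homotopy types one expects for any path-connected $X$ independent of which point is chosen (translate along a path), so replacing $\phi$ by a homotopic map with $\phi(x_0)=y_0$ and then rechoosing $\psi$ and the connecting homotopies to be based is legitimate up to the homotopy-invariance we are proving — or, more honestly, one proves first that $\tc_n$ and $\ltc_n$ are independent of the choice of basepoint within a fixed path-connected $X$ (a short lemma using a fixed path from $x_0$ to $x_0'$ to conjugate sections), and only then runs the argument above with $\phi$ adjusted so $\phi(x_0)=y_0$. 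Everything else — continuity in the compact-open topology, commutativity of the squares, the homotopy lifting to upgrade an ``approximate section'' to a section — is routine and can be left to the reader, exactly as the fibration verifications were.
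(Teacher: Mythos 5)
Your argument is correct in substance but takes a genuinely different route from the paper. The paper adapts Farber's original explicit construction: given a local section $s$ of $p_n^Y$ over $V\subset Y^n$, it writes down, on $U=(f^{\times n})^{-1}(V)$, a concrete concatenation of paths --- the track of the homotopy $H$ from $x_0$ to $gf(x_0)$, then $g\circ\phi$ for a fixed path $\phi$ from $f(x_0)$ to $y_0$, then reparametrized pieces of $g\circ s(f(x_1),\dots,f(x_n))$, interleaved with $H$-tracks from $gf(x_i)$ to $x_i$ --- and checks by hand that this hits $x_i$ at time $i/n$. This never invokes the fibration property of $p_n$, and the basepoint discrepancy is absorbed by splicing $g\circ\phi$ into the concatenation, so no well-pointedness is needed. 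You instead take the ``approximate section'' $\phi_*\circ s_i\circ\psi^{\times n}$ over $V_i=(\psi^{\times n})^{-1}(U_i)$, note that its composite with $p_n^Y$ is homotopic to the inclusion, and use the homotopy lifting property of $p_n^Y$ (available from the paper's Proposition 1) to upgrade it to a genuine section. That is the standard ``Schwarz genus is invariant under fibrewise homotopy domination'' argument; it is shorter, treats all four fibrations uniformly, and makes clear that your step 4 (the homotopy $\psi_*\phi_*\simeq\mathrm{id}$ on total spaces) is not actually needed --- only the commuting squares and the base homotopy $\phi^{\times n}\psi^{\times n}\simeq\mathrm{id}$ enter. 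The price is the basepoint bookkeeping you flag: your first proposed fix, ``replace $\phi$ by a homotopic map with $\phi(x_0)=y_0$,'' is not automatic for an arbitrary path-connected $X$ (it needs the homotopy extension property for $(X,\{x_0\})$, which the paper does not assume); your fallback --- prove basepoint-independence of $\tc_n$ and $\ltc_n$ within a fixed space first, or prepend a fixed path from $y_0$ to $\phi(x_0)$ before applying the lifting correction --- is the right move, and the paper's explicit formula is essentially a hands-on implementation of exactly that path correction. With that fix made explicit, your proof is complete.
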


\begin{proof}[Proof.] The proof is adapted from \cite{Far1}. We will only prove it for $\tc_n$. The other cases are similar.  Since $X$ an $Y$ are homotopy equivalent, there are maps $f: X\rightarrow Y$, $g: Y\rightarrow X$, and $H:X\times I\rightarrow X$ such that $H(x,0)=x$ and  $H(x,1)=(g\circ f)(x)$. Let $V$ be an open set in $Y^{\times n}$ such that there is a local section $s:V\rightarrow P_{y_0}Y$.  Using $s$ and $H$, we will construct a local section $s'$ on the open subset $U:=(f^{\times n})^{-1}(V)$ of $X^{ n}$. Because the inverse images of the sets in an open covering of $Y^{\times n}$ cover $X^{ n}$, it then follows that $\tc_n(X)\leq \tc_n(Y)$. Similarly, $\tc_n(X)\geq \tc_n(Y)$ and thus $\tc_n(X)= \tc_n(Y)$.\\

To construct the section $s'$, first let $\phi$ be a path in $Y$ satisfying $\phi(0)=f(x_0)$ and $\phi(1)=y_0$. If $0\leq t\leq \frac{1}{n}$, then we define 

\begin{equation*}
s'(x_1,x_2,\cdots,x_n)(t)=\left\{
\begin{array}{ll}
H(x_0,4nt) &\mbox{ if } 0\leq t\leq \frac{1}{4n},\\
(g\circ\phi)(4nt-1) &\mbox{ if }  \frac{1}{4n}\leq t\leq\frac{1}{2n},\\
(g\circ s(f(x_1),f(x_2),\cdots,f(x_n)))(4t-\frac{2}{n}) &\mbox{ if } \frac{1}{2n}\leq t\leq \frac{3}{4n},\\				
H(x_1,4-4nt) &\mbox{ if } \frac{3}{4n}\leq t\leq \frac{1}{n}.
\end{array}\right.
\end{equation*}

For $\frac{i}{n}\leq t\leq \frac{i+1}{n}$ where $i=1,2,\cdots,n-1$, we define

\begin{equation*}
s'(x_1,x_2,\cdots,x_n)(t)=\left\{
\begin{array}{ll}
H(x_i,3nt-3i)& \mbox{ if } \frac{i}{n}\leq t\leq \frac{3i+1}{3n},\\
(g\circ s(f(x_1),f(x_2),\cdots,f(x_n)))(3t-\frac{2i+1}{n})&\mbox{ if } \frac{3i+1}{3n}\leq t\leq\frac{3i+2}{3n},\\
H(x_{i+1},3+3i-3nt)&\mbox{ if } \frac{3i+2}{3n}\leq t\leq \frac{i+1}{n}.
\end{array}\right.
\end{equation*}

\end{proof}

\section{Relationships among $\TC_n$, $\LTC_n$, $\tc_n$ and $\ltc_n$}

The four notions of topological complexities are not all different. Before we state their relationships, let us recall a fact from \cite{Sch}.

\begin{lemma}
\label{Lemma1}
Let $p:E\rightarrow B$ and $p':E'\rightarrow B$ be fibrations over the same base space. Let $f: E\rightarrow E'$ be a map such that $p'\circ f=p$. Then  $\Sch(p')\leq \Sch(p)$.
\end{lemma}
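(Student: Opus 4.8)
The plan is to exhibit, from any covering of $B$ that witnesses $\Sch(p)$, a covering of the same cardinality that witnesses $\Sch(p')$; composing local sections with $f$ will do the job. If $\Sch(p)=\infty$ there is nothing to prove, so assume $\Sch(p)=k<\infty$.

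First I would fix an open cover $U_1,\dots,U_k$ of $B$ together with local sections $s_i:U_i\rightarrow E$ of $p$, so that $p\circ s_i=\mathrm{id}_{U_i}$ for each $i$. Next I would set $s_i'=f\circ s_i:U_i\rightarrow E'$ and check that $p'\circ s_i'=p'\circ f\circ s_i=p\circ s_i=\mathrm{id}_{U_i}$, using the hypothesis $p'\circ f=p$; hence each $s_i'$ is a local section of $p'$ over $U_i$. Since the same $k$ open sets cover $B$, this shows $\Sch(p')\leq k=\Sch(p)$.

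There is essentially no obstacle here: the only point requiring a word is that $f$ is continuous (given) so that $s_i'$ is continuous, and that the definition of $\Sch$ depends only on the existence of such a cover, not on any compatibility between the sections on overlaps. I would simply remark that the argument is symmetric in neither variable — it genuinely uses only that $f$ lies over $B$ — and that no fibration hypothesis on $p$ or $p'$ is actually needed for this inequality, though both are fibrations in our applications. This completes the proof.
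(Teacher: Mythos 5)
Your proof is correct and takes essentially the same approach as the paper: compose each local section of $p$ with $f$ to obtain a local section of $p'$ over the same open set. The extra remarks (handling $\Sch(p)=\infty$, continuity, and the observation that the fibration hypothesis is not actually needed) are accurate but not required.
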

\begin{proof}[Proof.] Let $U$ be an open set of $B$ such that there is a local section $s:U\rightarrow E$ for $p$. Then $f\circ s:U\rightarrow E'$ is a local section for $p'$. Therefore, $\Sch(p')\leq \Sch(p)$.
\end{proof}

Using this lemma, we will see that the four topological complexities reduce to two.

\begin{proposition}
$\ltc_n(X)=\tc_n(X)$ and $\LTC_n(X)=\TC_n(X)$.
\end{proposition}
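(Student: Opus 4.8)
The plan is to apply Lemma~\ref{Lemma1} twice for each equality, by exhibiting fiber-preserving maps in both directions between the relevant path/loop spaces over the common base $X^n$. For $\ltc_n(X)=\tc_n(X)$ we work over $X^n$ with the fibrations $q_n\colon\Omega_{x_0}X\to X^n$ and $p_n\colon P_{x_0}X\to X^n$. In one direction, the inclusion $j\colon\Omega_{x_0}X\hookrightarrow P_{x_0}X$ does \emph{not} satisfy $p_n\circ j=q_n$ on the nose, because $p_n$ samples a loop at $1/n,\dots,(n-1)/n,1$ while $q_n$ samples at $1/(n+1),\dots,n/(n+1)$; so instead of a literal inclusion I would build a reparametrization map $\alpha\colon\Omega_{x_0}X\to P_{x_0}X$ that sends a loop $f$ to the path which traverses $f$ on a suitably stretched schedule so that the new sample points $i/n$ hit the old values $f(i/(n+1))$ (and $f(1)=x_0$ automatically, matching the last coordinate of $p_n$). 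Concretely, $\alpha(f)$ follows $f$ according to a piecewise-linear change of variable on $[0,1]$ determined by the requirement $p_n(\alpha(f))=q_n(f)$; one checks $\alpha(f)(0)=x_0$ so $\alpha$ lands in $P_{x_0}X$, and $p_n\circ\alpha=q_n$, giving $\ltc_n(X)\le\tc_n(X)$ by Lemma~\ref{Lemma1}. For the reverse inequality I would construct $\beta\colon P_{x_0}X\to\Omega_{x_0}X$ by taking a based path $g$ (with $g(0)=x_0$) and closing it up to a loop: e.g. on the first part of $[0,1]$ run $g$ through its values $g(1/n),\dots,g(1)$ at the times $q_n$ samples, and on the remaining part return to $x_0$ via any fixed choice (running $g$ backwards, or a constant-then-nothing device), arranged so that $q_n(\beta(g))=p_n(g)$; then Lemma~\ref{Lemma1} gives $\tc_n(X)\le\ltc_n(X)$.

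For $\LTC_n(X)=\TC_n(X)$ the same strategy applies over $X^n$ with $Q_n\colon LX\to X^n$ and $P_n\colon PX\to X^n$. Here $P_n$ records $f(0),f(1/(n-1)),\dots,f(1)$ while $Q_n$ records $f(0),f(1/n),\dots,f((n-1)/n)$. In one direction I would send a free loop to a free path by a reparametrization that makes the $P_n$-sample values agree with the $Q_n$-sample values (using that for a loop $f(1)=f(0)$, so the extra final coordinate of $P_n$ is forced and costs nothing), yielding $\LTC_n(X)\le\TC_n(X)$. In the other direction, given a free path $g$ one closes it to a loop by appending a return journey (e.g.\ $g$ followed by $\overline g$, reparametrized), arranged so the $Q_n$-samples of the resulting loop recover $P_n(g)=(g(0),\dots,g(1))$; Lemma~\ref{Lemma1} then gives $\TC_n(X)\le\LTC_n(X)$.

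The main obstacle is bookkeeping rather than conceptual: one must choose the piecewise-linear reparametrizations carefully so that (i) the constraint relating the two sampling maps holds exactly, (ii) the boundary/basepoint conditions defining $P_{x_0}X$ or $LX$ are respected, and (iii) the maps are continuous in the compact-open topology — which they are, since they are built from evaluation and composition with fixed continuous reparametrizations. I would present one direction of one equality in full (say $\alpha$ for $\ltc_n\le\tc_n$), give explicit formulas for the reparametrization there, and then indicate that the remaining three constructions are entirely analogous, leaving the routine verifications to the reader exactly as the paper does for Proposition~\ref{PropositionInv} and for the fibration proposition. Throughout, the only input beyond elementary manipulation is Lemma~\ref{Lemma1}, applied four times.
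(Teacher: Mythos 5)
Your overall strategy --- build reparametrization maps in both directions between the total spaces over the common base $X^n$ and apply Lemma~\ref{Lemma1} twice for each equality --- is exactly the paper's, and the maps you sketch do exist and are essentially the paper's maps (e.g.\ $\alpha(f)(t)=f(\tfrac{n}{n+1}t)$ from $\Omega_{x_0}X$ to $P_{x_0}X$, and a ``traverse, then retrace back to $x_0$'' map from $P_{x_0}X$ to $\Omega_{x_0}X$). The genuine problem is that in all four applications you attach the wrong inequality to the map you have constructed. Lemma~\ref{Lemma1} takes $f\colon E\to E'$ with $p'\circ f=p$ and pushes a local section $s$ of $p$ forward to the local section $f\circ s$ of $p'$; it therefore bounds the genus of the \emph{target} fibration by that of the \emph{source}, $\Sch(p')\le\Sch(p)$. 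So your $\alpha\colon\Omega_{x_0}X\to P_{x_0}X$ with $p_n\circ\alpha=q_n$ gives $\Sch(p_n)\le\Sch(q_n)$, i.e.\ $\tc_n(X)\le\ltc_n(X)$ --- not $\ltc_n(X)\le\tc_n(X)$ as you assert --- and it is your $\beta\colon P_{x_0}X\to\Omega_{x_0}X$ with $q_n\circ\beta=p_n$ that yields $\ltc_n(X)\le\tc_n(X)$. The same swap occurs in both halves of the $\LTC_n(X)=\TC_n(X)$ argument.

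Because you do produce both maps, the two correct inequalities are in fact both available and the equalities survive once the labels are interchanged; so this is a repairable logical error rather than a missing construction. But as written each individual deduction is false, and a reader who checked only one direction would come away with an unproved claim. A small secondary point: your parenthetical that ``$f(1)=x_0$ automatically, matching the last coordinate of $p_n$'' is off --- the last coordinate of $p_n(\alpha(f))$ is $\alpha(f)(1)$, which must equal $f(\tfrac{n}{n+1})$ to match $q_n(f)$; the basepoint condition you actually need (and do state correctly afterwards) is $\alpha(f)(0)=x_0$, which is what places $\alpha(f)$ in $P_{x_0}X$.
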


\begin{proof}[Proof.]
Let us prove $\ltc_n(X)=\tc_n(X)$. The other case is similar.\\

Let $f: P_{x_0}X\rightarrow \Omega_{x_0}X$ be defined by
\begin{equation*}
f(\phi)(t)=\left\{
\begin{array}{ll}
\phi(\frac{n+1}{n}t)& \mbox{ if } 0\leq t\leq \frac{n}{n+1},\\
\phi((n+1)(1-t)) & \mbox{ if } \frac{n}{n+1}\leq t\leq 1.
\end{array}\right.
\end{equation*}

Then $p_n=q_n\circ f$. By Lemma \ref{Lemma1}, $\ltc_n(X)\leq \tc_n(X)$.\\

On the other hand, define $g: \Omega_{x_0}X\rightarrow P_{x_0}X$ by 
$$g(\phi)(t)=\phi(\frac{n}{n+1}t).$$

Then $q_n=p_n\circ g$. By Lemma \ref{Lemma1}, $\ltc_n(X)\geq \tc_n(X)$. Therefore, $\ltc_n(X)=\tc_n(X)$.

\end{proof}

\textbf{Remark.} $\LTC_2(X)=\TC(X)$ was first obtained by My Ismail Mamouni and Derfoufi Younes.\\

So $\LTC_n$ and $\ltc_n$ do not introduce new topological invariants. Starting from now on, we will only study $\tc_n$ and its relation to $\TC_n$. First of all, we have the same result as $\TC_n(X)\leq \TC_{n+1}(X)$ \cite{Rud} for $\tc_n$. And its proof is not different from that of \cite{Rud}.\\

\begin{proposition}
\label{Proposition2}
$\tc_n(X)\leq \tc_{n+1}(X)$.
\end{proposition}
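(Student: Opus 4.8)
The plan is to follow Rudyak's monotonicity argument for $\TC_n$, using that a motion plan through $n+1$ ordered points degenerates to one through $n$ ordered points once the last two prescribed points are made to coincide. Introduce the map $\iota\colon X^{n}\to X^{n+1}$, $\iota(x_1,\dots,x_n)=(x_1,\dots,x_{n-1},x_n,x_n)$, which duplicates the final coordinate. It is continuous, so the preimages under $\iota$ of the members of any open cover of $X^{n+1}$ form an open cover of $X^{n}$; this is the mechanism that will turn a minimal cover witnessing $\tc_{n+1}(X)$ into a cover witnessing $\tc_n(X)$.

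Concretely, suppose $V\subseteq X^{n+1}$ is open and $s\colon V\to P_{x_0}X$ is a local section of $p_{n+1}$. On $U:=\iota^{-1}(V)$ I would define
\[
s'(x_1,\dots,x_n)(t)=s\bigl(\iota(x_1,\dots,x_n)\bigr)\!\left(\tfrac{n}{n+1}\,t\right),\qquad t\in I.
\]
In words: apply the $(n+1)$-point plan to $(x_1,\dots,x_{n-1},x_n,x_n)$ --- the resulting based path hits $x_1,\dots,x_{n-1}$ at the parameters $\tfrac{1}{n+1},\dots,\tfrac{n-1}{n+1}$ and satisfies $s(\iota(\cdots))(\tfrac{n}{n+1})=s(\iota(\cdots))(1)=x_n$ --- then discard the (irrelevant) leg on $[\tfrac{n}{n+1},1]$ and rescale $[0,\tfrac{n}{n+1}]$ linearly onto $I$. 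One checks that $s'(x_1,\dots,x_n)(0)=x_0$ and $s'(x_1,\dots,x_n)(\tfrac{i}{n})=x_i$ for $i=1,\dots,n$, i.e.\ $p_n\circ s'=\mathrm{id}_U$, and continuity of $s'$ is immediate since it is assembled from $\iota$, $s$, and a fixed reparametrization of $I$. Feeding a cover of $X^{n+1}$ of cardinality $\tc_{n+1}(X)$ through this construction produces a cover of $X^n$ of the same cardinality over which $p_n$ admits local sections, whence $\tc_n(X)\le\tc_{n+1}(X)$.

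The argument can also be phrased through Lemma \ref{Lemma1}: the pullback $p'\colon E'\to X^n$ of the fibration $p_{n+1}$ along $\iota$ satisfies $\Sch(p')\le\Sch(p_{n+1})$ (pull local sections back along $\iota$, exactly as in the proof of Lemma \ref{Lemma1}), while $f\mapsto f\bigl(\tfrac{n}{n+1}\,\cdot\,\bigr)$ is a map $E'\to P_{x_0}X$ over $X^n$, so Lemma \ref{Lemma1} gives $\Sch(p_n)\le\Sch(p')$. I do not expect a genuine obstacle: the whole content is the bookkeeping of the parametrization, and the only point that really needs attention is that the requirement $s'(\cdots)(i/n)=x_i$ is what forces the choice of $\iota$ (duplicating the \emph{last} coordinate, so the discarded leg sits at the end) together with the precise rescaling factor $\tfrac{n}{n+1}$.
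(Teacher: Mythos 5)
Your argument is correct and follows essentially the same route as the paper: compose a local section of $p_{n+1}$ with a map $X^{n}\to X^{n+1}$, then truncate the resulting path to $[0,\tfrac{n}{n+1}]$ and rescale, exactly as in your formula $s'(\cdots)(t)=s(\iota(\cdots))(\tfrac{n}{n+1}t)$. The only difference is that you duplicate the last coordinate via $\iota(x_1,\dots,x_n)=(x_1,\dots,x_n,x_n)$, whereas the paper appends a point $x_*$ chosen from the last-coordinate projection of each open set; your choice is arguably the cleaner one, since a single fixed continuous $\iota$ makes it immediate that the preimages $\iota^{-1}(V_i)$ of an open cover of $X^{n+1}$ cover $X^{n}$, a point that needs a little extra care when the embedding varies with the open set.
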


\begin{proof}[Proof.]
Let $U$ be an open set in $X^{ n+1}$ and $s:U\rightarrow P_{x_0}X$ a local section for $p_{n+1}: P_{x_0}X\rightarrow X^{ n+1}$. Let $q$ be the projection of $X^{n+1}$ onto the last factor. Then choose a point $x_*$ in $q(U)\subset X$. Define the map $\mathbf{i}:X^{ n}\rightarrow X^{ n+1}$ by $\mathbf{i}(x_1,x_2,\cdots,x_n)=(x_1,x_2,\cdots,x_n,x_*)$. We also define $f:P_{x_0}X\rightarrow P_{x_0}X$ by $f(\phi)(t)=\phi(\frac{n}{n+1}t)$. Then the map $f\circ s\circ\mathbf{i}:\mathbf{i}^{-1}(U)\rightarrow P_{x_0}X$ is a local section of $p_n$.\\

Let $\{U_i\}$ be an open covering of $X^{ n+1}$ such that there are local sections over them for $p_{n+1}: P_{x_0}X\rightarrow X^{ n+1}$, then $\{\mathbf{i}^{-1}(U_i)\}$ is an open covering of $X^{ n}$ and there are local sections over them for $p_n: P_{x_0}X\rightarrow X^{ n}$ as constructed above. It then follows that $\tc_n(X)\leq \tc_{n+1}(X)$. 
\end{proof}

\begin{proposition} 
\label{Proposition1}
When $n\geq 2$, $\TC_n(X)\leq \tc_n(X)$.
\end{proposition}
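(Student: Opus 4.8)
The plan is to invoke Lemma~\ref{Lemma1}. Both $P_n\colon PX\to X^{n}$ and $p_n\colon P_{x_0}X\to X^{n}$ are fibrations over the common base $X^{n}$ (by the Proposition on fibrations above), so it is enough to exhibit a continuous map $f\colon P_{x_0}X\to PX$ with $P_n\circ f = p_n$; Lemma~\ref{Lemma1}, applied with $p'=P_n$ and $p=p_n$, then gives $\Sch(P_n)\le\Sch(p_n)$, i.e.\ $\TC_n(X)\le\tc_n(X)$.

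To build $f$, I would compare the two sampling schemes. The map $p_n$ records a based path $\phi$ at the parameters $\tfrac1n,\tfrac2n,\dots,\tfrac{n-1}{n},1$, while $P_n$ records a free path at the parameters $0,\tfrac1{n-1},\dots,\tfrac{n-2}{n-1},1$. The natural choice is therefore to let $f(\phi)$ be the restriction $\phi|_{[1/n,1]}$, reparametrized affinely onto the whole interval; concretely,
\begin{equation*}
f(\phi)(s)=\phi\!\left(\frac{(n-1)s+1}{n}\right),\qquad s\in I.
\end{equation*}
Since $n\ge 2$ the interval $[1/n,1]$ is nondegenerate and this is well defined; $f$ is continuous because it is precomposition with a fixed affine homeomorphism $I\to[1/n,1]$, and $f(\phi)\in PX$ because $PX$ carries no endpoint constraints (the condition $\phi(0)=x_0$ is simply dropped).

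Finally I would check $P_n\circ f=p_n$ by direct evaluation: for $j=0,1,\dots,n-1$ one computes $f(\phi)\bigl(\tfrac{j}{n-1}\bigr)=\phi\bigl(\tfrac{j+1}{n}\bigr)$, so the $n$-tuple $P_n(f(\phi))$ is exactly $\bigl(\phi(\tfrac1n),\phi(\tfrac2n),\dots,\phi(\tfrac{n-1}{n}),\phi(1)\bigr)=p_n(\phi)$, which finishes the proof. I do not expect a genuine obstacle; the only point requiring care is pinning down the reparametrizing affine map and its action on the sampling points exactly, and observing where the hypothesis $n\ge 2$ enters.
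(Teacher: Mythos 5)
Your proposal is correct and is essentially identical to the paper's proof: the paper defines the same map $f(\phi)(t)=\phi\bigl(\tfrac{n-1}{n}t+\tfrac{1}{n}\bigr)$, observes $P_n\circ f=p_n$, and applies Lemma~\ref{Lemma1}. Your verification of the sampling points is just a more explicit writing-out of the same computation.
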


\begin{proof}[Proof.]
Define $f:P_{x_0}X\rightarrow PX$ by $$f(\phi)(t)=\phi(\frac{n-1}{n}t+\frac{1}{n}).$$ Then $ P_n\circ f=p_n$. By Lemma \ref{Lemma1}, $\TC_n(X)\leq \tc_n(X)$.
\end{proof}

\textbf{Remark.} In general, there does not exist $g:PX\rightarrow P_{x_0}X$ so as to obtain $\tc_n(X)\leq \TC_n(X)$ using Lemma \ref{Lemma1}. In fact, to get such a $g$, one has to be able to find a path connecting any point in $X$ to $x_0$ continuously. But this is precisely obstructed by $\tc_1(X)$, which is greater than one unless $X$ is contractible. Nevertheless, we do have $\tc_{n-1}(X)\leq \TC_n(X)$. With Proposition \ref{Proposition5}, this also leads to Proposition \ref{Proposition2}.\\

\begin{proposition}
\label{Proposition5}
If $n\geq 2$, then $\tc_{n-1}(X)\leq \TC_n(X)$.
\end{proposition}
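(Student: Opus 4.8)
# Proof Proposal

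The plan is to exhibit a map $f : PX \to P_{x_0}X$ together with a comparison between the evaluation maps $P_n$ and $p_{n-1}$, so that Lemma \ref{Lemma1} applies. The difficulty noted in the preceding Remark is that there is no \emph{global} continuous way to connect an arbitrary endpoint of a free path to $x_0$. The key observation is that we do not need to reach $x_0$ from an arbitrary point of $X$: a path $\phi \in PX$ already carries the point $\phi(0)$, and we are free to reparametrize so that the portion of $\phi$ near $t=0$ is \emph{ignored} and instead we prepend a fixed path from $x_0$ to $\phi(0)$ — but that again requires connecting $x_0$ to an arbitrary point. So instead I would go the other way: use $\phi$ itself to travel, and only keep track of the coordinates $\phi(\tfrac{1}{n-1}), \phi(\tfrac{2}{n-1}), \dots, \phi(\tfrac{n-2}{n-1}), \phi(1)$, i.e. the last $n-1$ of the $n$ coordinates recorded by $P_n$. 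These are exactly $n-1$ points, which is what $p_{n-1}$ records for a based path.

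Concretely, first I would fix a path $\alpha$ in $X$ with $\alpha(0) = x_0$ and $\alpha(1) = \phi(0)$ — wait, this cannot depend continuously on $\phi$. Let me reorganize: the right move is to build $f : PX \to P_{x_0}X$ by \emph{discarding} $\phi(0)$ and using a fixed auxiliary path. Actually the clean approach is: define $f(\phi)$ to first run along a fixed loop or path and then along $\phi$ restricted to $[\tfrac{1}{n-1}, 1]$, reparametrized. But the starting point $x_0$ must connect to $\phi(\tfrac{1}{n-1})$, still not continuous. The genuinely correct idea, matching $\tc_{n-1}$'s $n-1$ recorded points to $P_n$'s recording of $n$ points where the \emph{first} is the free left endpoint: I would instead show the inequality by comparing fibrations over \emph{different} base spaces using the section-transfer argument directly, mimicking Proposition \ref{Proposition2}'s proof. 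Let $U \subseteq X^{n}$ with local section $s : U \to PX$ for $P_n$. Pick $x_* \in \pi_1(U)$ (projection to the first factor), let $\mathbf{i} : X^{n-1} \to X^{n}$ be $\mathbf{i}(x_1,\dots,x_{n-1}) = (x_*, x_1,\dots,x_{n-1})$, and then post-compose $s \circ \mathbf{i}$ with a map $PX \to P_{x_0}X$ that prepends a fixed path from $x_0$ to $x_*$ (now legitimate, since $x_*$ is a \emph{single fixed point}) and reparametrizes so the recorded coordinates land correctly.

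So the key steps, in order, are: (1) Given an open cover $\{U_i\}$ of $X^n$ with local sections $s_i : U_i \to PX$ for $P_n$, for each $i$ choose a fixed point $x_i^* \in \pi_1(U_i) \subseteq X$ and a fixed path $\beta_i$ in $X$ from $x_0$ to $x_i^*$. (2) Define $\mathbf{i}_i : X^{n-1} \to X^n$ inserting $x_i^*$ in the first slot; the sets $\mathbf{i}_i^{-1}(U_i)$ cover $X^{n-1}$. (3) Define $g_i : PX \to P_{x_0}X$ by $g_i(\phi) = \beta_i$ followed by $\phi$ suitably reparametrized (concatenation, with $\phi$ run on an interval so that its sample points $\phi(0), \phi(\tfrac{1}{n-1}),\dots,\phi(1)$ get mapped to the fractions $\tfrac{1}{n-1},\dots,1$ of $g_i(\phi)$ — precisely the evaluation points of $p_{n-1}$). (4) Check that $g_i \circ s_i \circ \mathbf{i}_i : \mathbf{i}_i^{-1}(U_i) \to P_{x_0}X$ is a local section of $p_{n-1}$, which amounts to a routine bookkeeping verification that the sample points agree. (5) Conclude $\tc_{n-1}(X) \le \TC_n(X)$.

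The main obstacle is step (3)–(4): getting the reparametrization exactly right so that $p_{n-1}\bigl(g_i(\phi)\bigr) = (\phi(0), \phi(\tfrac{1}{n-1}), \dots, \phi(\tfrac{n-2}{n-1}))$ — note that $p_{n-1}$ records $n-1$ points at fractions $\tfrac{1}{n-1}, \dots, \tfrac{n-2}{n-1}, 1$, while $P_n \circ s_i = \mathrm{id}$ on $U_i$ gives us control of $\phi$ at $0, \tfrac{1}{n-1}, \dots, \tfrac{n-2}{n-1}, 1$. Since $x_i^*$ is in the \emph{first} coordinate slot (via $\mathbf{i}_i$), the path $s_i(\mathbf{i}_i(x_1,\dots,x_{n-1}))$ starts at $x_i^*$, so prepending $\beta_i$ (from $x_0$ to $x_i^*$) yields a based path at $x_0$; I then need to compress $\beta_i$ into $[0,\varepsilon]$ and stretch $\phi$ across $[\varepsilon, 1]$ so that the $n-1$ interior/endpoint samples of $\phi$ (excluding $\phi(0)=x_i^*$) hit the $n-1$ sample fractions of $p_{n-1}$ — this forces a specific piecewise-linear time change whose formula I would write out but whose correctness is a direct computation. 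Everything else is a verbatim adaptation of the transfer argument already used in Proposition \ref{Proposition2} and Lemma \ref{Lemma1}.
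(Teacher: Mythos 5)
Your overall strategy is the paper's, but step (2) contains a genuine error: if you choose a \emph{different} point $x_i^*\in\pi_1(U_i)$ for each $i$, the sets $\mathbf{i}_i^{-1}(U_i)$ need \emph{not} cover $X^{n-1}$. That $\{U_i\}$ covers $X^n$ guarantees, for each single fixed $y\in X$, that the slices $(\{y\}\times X^{n-1})\cap U_i$ cover $\{y\}\times X^{n-1}$; it says nothing about slices taken at different heights $x_i^*$ in different $U_i$. Concretely, for $X=S^1$ and $n=2$, take $U_1=\{(x,y):x\neq -y\}$ and $U_2=\{(x,y):x\neq y\}$, which cover $S^1\times S^1$ and admit local sections of $P_2$; choosing $x_1^*=1$ and $x_2^*=-1$ gives $\mathbf{i}_1^{-1}(U_1)=S^1\setminus\{-1\}$ and $\mathbf{i}_2^{-1}(U_2)=S^1\setminus\{-1\}$, whose union misses $-1$. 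The fix is to insert one and the same point $x^*$ in the first slot for every $i$ (discarding those $U_i$ whose pullback is then empty); any single $x^*$ works, and the cleanest choice is $x^*=x_0$.

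With $x^*=x_0$ your steps (3)--(4) evaporate, and this is exactly what the paper does: setting $\mathbf{j}(x_1,\dots,x_{n-1})=(x_0,x_1,\dots,x_{n-1})$, the path $s_i(\mathbf{j}(x_1,\dots,x_{n-1}))$ already starts at $x_0$, hence already lies in $P_{x_0}X$, and since $P_n$ evaluates at $0,\tfrac{1}{n-1},\dots,\tfrac{n-2}{n-1},1$ while $p_{n-1}$ evaluates at $\tfrac{1}{n-1},\dots,\tfrac{n-2}{n-1},1$, the composite $s_i\circ\mathbf{j}$ is verbatim a local section of $p_{n-1}$ on $\mathbf{j}^{-1}(U_i)$ --- no prepended path $\beta_i$ and no reparametrization. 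Your concatenation-and-reparametrization construction would also go through once the single-basepoint correction is made, but it only adds bookkeeping that the choice $x^*=x_0$ renders unnecessary.
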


\begin{proof}[Proof.]
Let  $U_i$ be open sets in $X^{n}$ such that each $U_i$ has nonempty intersection with $\{x_0\}\times X^{n-1}$ and $\{x_0\}\times X^{n-1}$ is contained in the union of these $U_i$. Let $s_i:U_i\rightarrow PX$ be local sections for $P_n:PX\rightarrow X^{ n}$. Define the map $\mathbf{j}:X^{ n-1}\rightarrow X^n$ by $\mathbf{j}(x_1,x_2,\cdots,x_{n-1})=(x_0,x_1,\cdots,x_{n-1})$. Then each composite $s_i\circ \mathbf{j}$ is actually a map from $\mathbf{j}^{-1}(U_i)$ to $P_{x_0}X$. In fact, it is a local section of $p_{n-1}:P_{x_0}X\rightarrow X^{ n-1}$. Thus, we have $\tc_{n-1}(X)\leq \TC_n(X)$.
\end{proof}

\textbf{Remark.} Combined with Corollary \ref{Corollary1}, this result was first known in \cite{BGRT}.\\

\section{$\tc_n$ and Lusternik-Schnirelmann category}

Unlike $\TC_n$, which are new topological invariants, $\tc_n$ are directly related to the classical notion of Lusternik-Schnirelmann category.\\

\begin{definition}
$\cat(X)$, the Lusternik-Schnirelmann category of $X$, is defined to be the least cardinality of an open covering of $X$ in which each open set is contractible in $X$.
\end{definition}

Let us recall a theorem from \cite{Sch}.\\

\begin{theorem}
\label{Theorem1}
Let $p:E\rightarrow B$ be a fibration. Then, $\Sch(p)\leq \cat(B)$ and the equality holds if $E$ is contractible.
\end{theorem}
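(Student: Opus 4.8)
The plan is to prove the two assertions separately: the inequality $\Sch(p)\le\cat(B)$ by a direct use of the homotopy lifting property of $p$, and the reverse inequality (under the hypothesis that $E$ is contractible) by transporting a contraction of $E$ down to $B$.

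For $\Sch(p)\le\cat(B)$, I would start with an open cover $\{U_1,\dots,U_k\}$ of $B$ realizing $\cat(B)=k$, so that each inclusion $U_i\hookrightarrow B$ is null-homotopic; fix a homotopy $h_i\colon U_i\times I\to B$ with $h_i(\cdot,0)$ the inclusion of $U_i$ and $h_i(\cdot,1)$ the constant map at some point $b_i$. Choose $e_i\in p^{-1}(b_i)$ (nonempty since $p$ is surjective, which holds in every situation of this paper) and apply the homotopy lifting property of $p$ to the reversed homotopy $(u,t)\mapsto h_i(u,1-t)$, using the constant map $U_i\to E$ at $e_i$ as the lift at time $0$. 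The time-$1$ value of the resulting lift is a map $s_i\colon U_i\to E$ with $p\circ s_i$ equal to the inclusion of $U_i$, i.e.\ a local section of $p$ over $U_i$. Since $p$ then admits a section over each member of a cover of size $k$, we get $\Sch(p)\le k=\cat(B)$.

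For the reverse inequality when $E$ is contractible, I would take an open cover $\{U_1,\dots,U_m\}$ of $B$ realizing $\Sch(p)=m$ together with local sections $s_i\colon U_i\to E$, $p\circ s_i=\text{inclusion}$. Fix a contracting homotopy $K\colon E\times I\to E$ with $K(\cdot,0)=\mathrm{id}_E$ and $K(\cdot,1)$ the constant map at a point $e_0$. Then $(u,t)\mapsto p\bigl(K(s_i(u),t)\bigr)$ is a homotopy in $B$ from the inclusion $U_i\hookrightarrow B$ (at $t=0$) to the constant map at $p(e_0)$ (at $t=1$), so each $U_i$ is contractible in $B$. Hence $\{U_i\}$ is a categorical cover of size $m$, giving $\cat(B)\le m=\Sch(p)$; combined with the first part, $\Sch(p)=\cat(B)$.

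Both arguments are short, and I do not expect a genuine obstacle. The only point requiring care is the bookkeeping in the first part: one must reverse the contracting homotopy of $U_i$ so that the fibration's lifting property produces a map whose projection to $B$ is the identity on $U_i$ rather than a constant, and one needs a point in the fibre over $b_i$ to initialize the lift — which is exactly where surjectivity of $p$ (automatic here, as $B$ is path-connected with nonempty fibres) is used.
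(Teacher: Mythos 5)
Your proposal is correct and follows essentially the same route as the paper: lifting the contracting homotopies of the categorical cover through $p$ (starting from a constant lift in the fibre over the contraction point) to produce local sections, and pushing a contraction of $E$ down through $p\circ H\circ(s_i\times\mathrm{id})$ to show each section domain is contractible in $B$. The only cosmetic difference is that you reverse the homotopy explicitly, whereas the paper writes it with $h_i(x,0)=b_i$ and $h_i(x,1)=x$ from the start.
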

\begin{proof}[Proof.] The first part uses the covering homotopy property of fibrations. Let $U_i$, $i=1,2,\cdots,\cat(B)$ be an open covering of $B$ such that there are $b_i\in B$ and  $h_i:U_i\times I\rightarrow B$ satisfying $h_i(x,0)=b_i$ and $h_i(x,1)=x$. Let $e_i\in p^{-1}(b_i)$ and $f_i: U_i\rightarrow E$ be defined by $f_i(x)=e_i$. Thus, $p\circ f_i(x)=h_i(x,0)$. Then the covering homotopy property says there are $\widetilde{h}_i:U_i\times I\rightarrow E$ such that $p\circ \widetilde{h}_i=h_i$. Thus, $p\circ \widetilde{h}_i(x,1)=h_i(x,1)=x$. So $s_i:U_i\rightarrow E$ defined by $s_i(x)=\widetilde{h}_i(x,1)$ is a section on $U_i$.   Therefore, $\Sch(p)\leq \cat(B)$.\\

Now let $E$ be contractible, i.e., there is $e_0\in E$ and $H:E\times I\rightarrow E$ such that $H(x,0)=x$ and $H(x,1)=e_0$. Let $V_i$, $i=1,2,\cdots,\Sch(p)$ be an open covering of $B$ and $s_i:V_i\rightarrow E$ local sections. Then $G_i:V_i\times I\rightarrow B$ defined by $G_i(x,t)=p\circ H(s_i(x),t)$ satisfies $G_i(x,0)=p\circ s_i(x)=x$ and $G_i(x,1)=p(e_0)$, i.e., $V_i$ is contractible in $B$. Thus, $\cat(B)\leq \Sch(p)$. Combining with the above fact, we have $\Sch(p)=\cat(B)$.
\end{proof}

Recall that the total space in $p_n:P_{x_0}X\rightarrow X^{ n}$ is contractible \cite{Ser}. Therefore, by Theorem \ref{Theorem1}, we have the following result.\\

\begin{corollary}
\label{Corollary1}
 $$\tc_n(X)=\cat(X^{ n}).$$
\end{corollary}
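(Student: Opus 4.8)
The proof plan follows directly from assembling two facts stated above. The map $p_n : P_{x_0}X \to X^n$ is a fibration by the Proposition on fibrations, and its total space $P_{x_0}X$ is contractible: the based path space deformation retracts onto the constant path at $x_0$ via the standard homotopy $H(f,s)(t) = f((1-s)t)$. Hence the hypotheses of Theorem \ref{Theorem1} are met with $E = P_{x_0}X$ contractible and $B = X^n$, and the conclusion $\Sch(p_n) = \cat(X^n)$ is immediate. Finally, by the definition of $\tc_n$ we have $\tc_n(X) = \Sch(p_n)$, so $\tc_n(X) = \cat(X^n)$.

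So the proof I would write is essentially one paragraph: first invoke the earlier Proposition to note $p_n$ is a fibration; then observe (or recall from \cite{Ser}) that $P_{x_0}X$ is contractible; then apply the ``equality'' clause of Theorem \ref{Theorem1} to conclude $\Sch(p_n) = \cat(X^n)$; then rewrite the left-hand side as $\tc_n(X)$ using its definition. The text preceding the corollary already flags both inputs (``the total space in $p_n:P_{x_0}X\rightarrow X^{ n}$ is contractible'' and ``by Theorem \ref{Theorem1}''), so the corollary is meant to be read off with no further work.

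There is essentially no obstacle here, since all the machinery has been built in the preceding sections; the only thing worth spelling out is why $P_{x_0}X$ is contractible, if one does not wish to cite \cite{Ser} for it. That contraction is the map $P_{x_0}X \times I \to P_{x_0}X$ sending $(f,s)$ to the path $t \mapsto f((1-s)t)$, which is continuous in the compact-open topology, equals $f$ at $s=0$, equals the constant path $x_0$ at $s=1$, and preserves the basepoint condition $f(0)=x_0$ throughout. If anything could be called the ``hard part,'' it is purely expository: making sure the reader sees that the contractibility of the \emph{total space} (not the base) is exactly what upgrades the inequality $\Sch(p_n) \le \cat(X^n)$ to an equality in Theorem \ref{Theorem1}.

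\begin{proof}[Proof.]
By the Proposition on fibrations, $p_n : P_{x_0}X \to X^n$ is a (Hurewicz) fibration. Its total space $P_{x_0}X$ is contractible: the map $P_{x_0}X \times I \to P_{x_0}X$ sending $(f,s)$ to the path $t \mapsto f((1-s)t)$ is continuous, restricts to the identity at $s=0$ and to the constant map at the constant path $x_0$ when $s=1$, and each $t \mapsto f((1-s)t)$ still sends $0$ to $f(0)=x_0$. Applying Theorem \ref{Theorem1} with $E = P_{x_0}X$ and $B = X^n$, the contractibility of $E$ gives $\Sch(p_n) = \cat(X^n)$. Since $\tc_n(X) = \Sch(p_n)$ by definition, we conclude $\tc_n(X) = \cat(X^n)$.
\end{proof}
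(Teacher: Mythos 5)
Your proof is correct and follows exactly the paper's route: the paper derives the corollary by recalling that the total space of $p_n\colon P_{x_0}X\to X^n$ is contractible (citing \cite{Ser}) and then applying the equality clause of Theorem \ref{Theorem1}, which is precisely your argument. Your only addition is writing out the explicit contraction $(f,s)\mapsto\bigl(t\mapsto f((1-s)t)\bigr)$, which is a harmless (and correct) bit of extra detail.
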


\textbf{Remark.}  Theorem \ref{Theorem1} applied to $P_n: PX\rightarrow X^{ n}$ and Corollary \ref{Corollary1} also tell us that $\TC_n(X)\leq \tc_n(X)$ when $n\geq 2$, which was obtained in Proposition \ref{Proposition1}.

\section{Examples of $\tc_n(X)$}

We will compute $\tc_n(X)$, i.e., $\ltc_n(X)$ for several familiar families of spaces (CW-complexes). Some of them are naturally configuration spaces of robotic systems. Since $\tc_n(X)=\cat(X^n)$, let us first recall some theorems for the Lusternik-Schnirelmann category. A good reference is \cite{CLOT}. All proofs can be found in it.\\

First of all, the dimension and higher connectedness of a CW-complex provide an upper bound for $\cat$. Using an equivalent formulation of the Lusternik-Schnirelmann category due to Whitehead, one has the following theorem.\\

\begin{theorem}
\label{TheoremW}
If $X$ is an $(n-1)-$connected ($\pi_i(X)=0$ for $i=0,1,\cdots,n-1$) CW-complex for $n\geq 1$, then $$\cat(X)\leq \mathrm{dim}(X)/n+1.$$
\end{theorem}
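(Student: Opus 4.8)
The plan is to deduce this from Whitehead's reformulation of Lusternik--Schnirelmann category together with a cellular compression argument. Write $T^k(X)\subset X^k$ for the fat wedge $\{(x_1,\dots,x_k)\in X^k : x_i=x_0 \text{ for at least one } i\}$. Whitehead's characterization states that $\cat(X)\le k$ if and only if the iterated diagonal $\Delta_k:X\to X^k$, $x\mapsto(x,\dots,x)$, is homotopic to a map that factors through the inclusion $T^k(X)\hookrightarrow X^k$ (this is the normalization under which $\cat(\mathrm{pt})=1$, matching the definition used above). So it would suffice, with $k:=\lfloor\dim(X)/n\rfloor+1$, to compress $\Delta_k$ into $T^k(X)$.

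The first step is to estimate the connectivity of the pair $(X^k,T^k(X))$. Since $X$ is $(n-1)$-connected, the iterated smash product $X^{\wedge k}=X^k/T^k(X)$ is $(nk-1)$-connected, because smashing an $a$-connected space with a $b$-connected space yields an $(a+b+1)$-connected space. As $T^k(X)\hookrightarrow X^k$ is a CW pair, the relative Hurewicz theorem then gives that $(X^k,T^k(X))$ is itself $(nk-1)$-connected, i.e.\ $\pi_i(X^k,T^k(X))=0$ for all $i\le nk-1$.

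The second step is the compression itself. A short computation shows that the chosen $k$ forces $\dim(X)\le nk-1$: since $\lfloor\dim(X)/n\rfloor>\dim(X)/n-1$ we get $n\lfloor\dim(X)/n\rfloor\ge \dim(X)-n+1$, hence $nk\ge\dim(X)+1$. Now $\Delta_k:X\to X^k$ is a map from a CW complex of dimension $\le nk-1$, and because every relative homotopy group $\pi_i(X^k,T^k(X))$ vanishes for $i\le nk-1$, standard obstruction theory lets us deform $\Delta_k$, one cell of $X$ at a time, to a map with image in $T^k(X)$. By Whitehead's criterion this yields $\cat(X)\le k\le\dim(X)/n+1$.

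The point requiring the most care is the case $n=1$: there $X$, $X^k$ and $T^k(X)$ need not be simply connected, so the passage from ``$X^{\wedge k}$ is $(k-1)$-connected'' to ``the pair $(X^k,T^k(X))$ is $(k-1)$-connected'' via relative Hurewicz must be justified by first checking that $T^k(X)\hookrightarrow X^k$ is surjective on $\pi_1$ and then arguing on universal covers --- or, more simply, by observing that $n=1$ is precisely the classical dimension bound $\cat(X)\le\dim(X)+1$ for connected CW complexes, whose proof is standard. For $n\ge2$ all spaces in sight are simply connected and the argument above applies without modification. Full details are in \cite{CLOT}.
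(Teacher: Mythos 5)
Your argument is correct and is exactly the route the paper intends: it invokes Whitehead's fat-wedge reformulation of $\cat$ and defers the details to \cite{CLOT}, whereas you have spelled out the connectivity estimate for $(X^k,T^k(X))$ and the compression of the diagonal, including the correct normalization and the $n=1$ caveat. No substantive difference in approach.
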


On the other hand, a lower bound for $\cat(X)$ is provided by the index of nilpotency of cohomology rings of $X$.\\

\begin{theorem} 
\label{TheoremN}
Given a commutative ring $R$ and consider the reduced cohomology ring $\widetilde{H}^*(X;R)$. The index of nilpotency of $\widetilde{H}^*(X;R)$, denoted by $\nil_R(X)$, is the least integer $N$ such that $\left(\widetilde{H}^*(X;R)\right)^N=0$ under the cup prodcut. Then for any $R$, $\nil_R(X)\leq\cat(X)$.\\
\end{theorem}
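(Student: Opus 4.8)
The statement to prove is Theorem \ref{TheoremN}: for any commutative ring $R$, the index of nilpotency $\nil_R(X)$ of the reduced cohomology ring $\widetilde{H}^*(X;R)$ is a lower bound for $\cat(X)$. The plan is to use the open-covering definition of $\cat(X)$ directly, together with the long exact sequence of a pair and the naturality of the cup product. Suppose $\cat(X)=k$, so there is an open covering $U_1,\dots,U_k$ of $X$ with each inclusion $j_i\colon U_i\hookrightarrow X$ nullhomotopic. The key observation is that a nullhomotopic inclusion forces the restriction map $\widetilde{H}^*(X;R)\to \widetilde{H}^*(U_i;R)$ to be zero, hence by the long exact sequence of the pair $(X,U_i)$ every class in $\widetilde{H}^*(X;R)$ lifts to a relative class in $H^*(X,U_i;R)$.

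The central step is then the cup-product argument. Given arbitrary classes $u_1,\dots,u_k\in\widetilde{H}^*(X;R)$, for each $i$ pick a relative lift $\bar u_i\in H^*(X,U_i;R)$ of $u_i$. The relative cup product gives a class $\bar u_1\cup\cdots\cup\bar u_k\in H^*(X,U_1\cup\cdots\cup U_k;R)=H^*(X,X;R)=0$. Mapping this back to $H^*(X;R)$ via the natural map from the pair to the space shows $u_1\cup\cdots\cup u_k=0$. Since the $u_i$ were arbitrary, $\bigl(\widetilde{H}^*(X;R)\bigr)^k=0$, so $\nil_R(X)\le k=\cat(X)$, which is exactly the claim.

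I would organize the write-up in three short movements: (1) recall that $j_i\simeq *$ implies $j_i^*=0$ on reduced cohomology, and deduce surjectivity of $H^*(X,U_i;R)\to \widetilde H^*(X;R)$ from the long exact sequence of $(X,U_i)$; (2) state the naturality of the relative cup product $H^p(X,A;R)\otimes H^q(X,B;R)\to H^{p+q}(X,A\cup B;R)$ and its compatibility with restriction to the absolute groups; (3) assemble the product of the lifts and push it into $H^*(X,X;R)=0$. Each movement is standard, so the prose can be brief and can cite \cite{CLOT} for the relative cup-product formalism.

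The main obstacle — really the only subtle point — is the relative cup product landing in $H^*(X,U_1\cup\cdots\cup U_k;R)$ rather than in a sum of separate relative groups: this requires the excision/Mayer--Vietoris-type compatibility that $H^p(X,A)\otimes H^q(X,B)\to H^{p+q}(X,A\cup B)$ is well-defined and natural, which is where one genuinely uses that the $U_i$ are \emph{open} (so that $\{U_i\}$ is an excisive covering and the relative cup product is defined on the nose). Once that is granted, the rest is bookkeeping with the long exact sequence and naturality. Since the excerpt explicitly says "All proofs can be found in" \cite{CLOT}, it would also be acceptable to give only the above sketch and defer the relative-cup-product details to that reference.
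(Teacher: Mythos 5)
Your argument is correct and is precisely the standard cup-length proof (nullhomotopic inclusions give relative lifts, whose relative cup product lands in $H^*(X,U_1\cup\cdots\cup U_k;R)=H^*(X,X;R)=0$); the paper gives no proof of its own and simply defers to \cite{CLOT}, where this exact argument appears. Your attention to the excisiveness of the open cover, which makes the relative cup product land in $H^*(X,A\cup B;R)$, is the right place to be careful.
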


The following theorem relates the categories of different $X$ powers.\\

\begin{theorem}
\label{TheoremP}
If $X$ and $Y$ are CW-complexes, then $\cat(X\times Y)-1\leq (\cat(X)-1)+(\cat(Y)-1)$. Thus, $\cat(X^n)\leq n (\cat(X)-1)+1$.\\
\end{theorem}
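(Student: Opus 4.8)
The plan is to reduce both assertions to the single two–factor inequality $\cat(X\times Y)\le \cat(X)+\cat(Y)-1$; the power bound then follows by an immediate induction on $n$, writing $X^{n}=X\times X^{n-1}$ and combining the two–factor inequality with $\cat(X^{n-1})\le(n-1)(\cat(X)-1)+1$, since $\cat(X)+\bigl[(n-1)(\cat(X)-1)+1\bigr]-1=n(\cat(X)-1)+1$. So fix CW-complexes $X,Y$ together with categorical open covers $\{A_1,\dots,A_p\}$ of $X$ (with $p=\cat(X)$, each $A_i$ contractible in $X$) and $\{B_1,\dots,B_q\}$ of $Y$ (with $q=\cat(Y)$); the goal is to produce a categorical open cover of $X\times Y$ with $p+q-1$ members.

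The first point is that the naive product cover $\{A_i\times B_j\}$ has $pq$ members, each contractible in $X\times Y$, but one cannot in general merge the sets $A_i\times B_j$ lying on a fixed antidiagonal $i+j=k$: a union of overlapping open sets, each contractible in the ambient space, need not be contractible in it (already false for two arcs covering $S^1$). Hence the argument must be homotopy-theoretic rather than a set-theoretic regrouping. I would work through the genus description already available in this paper: by Theorem \ref{Theorem1} applied to the based path fibration $p_1\colon P_{z_0}Z\to Z$ (whose total space is contractible) one has $\cat(Z)=\Sch(p_1)=\tc_1(Z)$, the $n=1$ case of Corollary \ref{Corollary1}. Combined with the join criterion of Schwarz \cite{Sch} — $\Sch(f)\le m$ if and only if the $m$-fold fibrewise join $f^{*m}$ admits a section — category becomes accessible through fibrewise joins of the path fibration; equivalently one may phrase everything in terms of the Ganea fibrations $G_{m-1}(Z)\to Z$ as in \cite{CLOT}.

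With this set up, the steps are: (i) identify $P_{(x_0,y_0)}(X\times Y)\cong P_{x_0}X\times P_{y_0}Y$, under which the path fibration of $X\times Y$ is the product fibration $\pi_X\times\pi_Y$, so $\cat(X\times Y)=\Sch(\pi_X\times\pi_Y)$; (ii) from the sections of $\pi_X^{*p}$ and $\pi_Y^{*q}$ supplied by $\cat(X)=p$ and $\cat(Y)=q$, build a section of $(\pi_X\times\pi_Y)^{*(p+q-1)}$; (iii) conclude $\Sch(\pi_X\times\pi_Y)\le p+q-1$. The heart of the matter, and the step I expect to be the main obstacle, is (ii): it requires a fibrewise map $\pi_X^{*p}\times\pi_Y^{*q}\to(\pi_X\times\pi_Y)^{*(p+q-1)}$ over $X\times Y$, which on fibres is a natural map $F^{*p}\times G^{*q}\to(F\times G)^{*(p+q-1)}$ coming from the inclusions $F\hookrightarrow F\times G$, $G\hookrightarrow F\times G$, associativity of the join, and a careful merging of a single join coordinate — this is exactly what produces $p+q-1$ rather than $p+q$. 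The bookkeeping (compatibility of join parameters, and the fibrewise basepoints furnished by the assumed sections) is routine but delicate, and is the part carried out in \cite{CLOT}; I would follow that treatment rather than reprove it from scratch. Feeding the resulting inequality $\cat(X\times Y)\le\cat(X)+\cat(Y)-1$ into the induction on $n$ described above yields $\cat(X^{n})\le n(\cat(X)-1)+1$.
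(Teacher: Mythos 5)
The paper does not actually prove Theorem \ref{TheoremP}: it is recalled as a classical fact with the remark that ``all proofs can be found in \cite{CLOT}.'' So there is no in-paper argument to match; what you have written is a correct outline of one of the two standard proofs in \cite{CLOT}, namely the Whitehead--Ganea/fibrewise-join proof, and your reduction of the power bound to the two-factor inequality by induction (with the arithmetic $\cat(X)+[(n-1)(\cat(X)-1)+1]-1=n(\cat(X)-1)+1$) is correct. Your steps (i)--(iii) are sound: $P_{(x_0,y_0)}(X\times Y)\cong P_{x_0}X\times P_{y_0}Y$, CW-complexes are paracompact so Schwarz's join criterion applies, and the inequality $\Sch(f\times g)\leq\Sch(f)+\Sch(g)-1$ is exactly the content of the fibrewise map you describe in (ii); you are right that this is the delicate step, and deferring it to \cite{Sch} or \cite{CLOT} leaves your write-up no less complete than the paper's own treatment. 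One correction, though: your claim that ``the argument must be homotopy-theoretic rather than a set-theoretic regrouping'' overstates the obstacle you correctly identified. The most elementary proof (Theorem 1.37 of \cite{CLOT}, going back to \v{S}varc's genus paper) \emph{is} the antidiagonal regrouping $W_k=\bigcup_{i+j=k}A_i\times B_j$; it is rescued by first refining the two covers (using normality/partitions of unity, available for CW-complexes) so that for each fixed $k$ the sets $A_i\times B_j$ with $i+j=k$ are pairwise \emph{disjoint}, and observing that a disjoint union of open sets, each contractible in a path-connected ambient space, is again contractible in it (contract each piece, then drag the resulting points to a common basepoint; continuity holds because the pieces are clopen in their union). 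That elementary route avoids joins and Ganea fibrations entirely, at the cost of the disjointification bookkeeping; your route trades that bookkeeping for the join-coordinate merging in step (ii). Either way the theorem holds, and your proposal is acceptable as it stands.
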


\begin{corollary}
\label{Corollary2}
By Theorem \ref{TheoremW} and \ref{TheoremP}, if $X$ is an $(r-1)-$connected CW-complex, then $$\tc_n(X)\leq n(\mathrm{dim}(X)/r)+1.$$
\end{corollary}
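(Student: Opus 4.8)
The plan is to combine the two cited theorems with Corollary \ref{Corollary1} in a purely formal way. First I would invoke Corollary \ref{Corollary1} to replace $\tc_n(X)$ by $\cat(X^n)$, so that the claim becomes an inequality about the Lusternik-Schnirelmann category of the $n$th power. Next I would observe that if $X$ is $(r-1)$-connected, then by Theorem \ref{TheoremW} we have $\cat(X)\leq \dim(X)/r+1$, which rearranges to $\cat(X)-1\leq \dim(X)/r$.

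Then I would apply Theorem \ref{TheoremP}, which gives $\cat(X^n)\leq n(\cat(X)-1)+1$. Substituting the bound $\cat(X)-1\leq \dim(X)/r$ from the previous step yields $\cat(X^n)\leq n(\dim(X)/r)+1$. Finally, rewriting the left-hand side as $\tc_n(X)$ via Corollary \ref{Corollary1} gives exactly the desired inequality $\tc_n(X)\leq n(\dim(X)/r)+1$.

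There is essentially no hard step here: the entire argument is a two-line chain of substitutions into inequalities already established in the excerpt. The only point requiring a moment's care is making sure the hypotheses line up—Theorem \ref{TheoremW} is stated for an $(n-1)$-connected complex with parameter $n\geq 1$, so one simply reads it with $n$ replaced by $r$, using that $X$ is assumed $(r-1)$-connected with $r\geq 1$; and Theorem \ref{TheoremP} requires $X$ (hence each factor of $X^n$) to be a CW-complex, which is part of the standing assumption in this section. If anything could be called a subtlety, it is only the bookkeeping of the ``$-1$'' shifts in the product inequality, but these cancel cleanly against the ``$+1$'' in Whitehead's bound, leaving the stated form.
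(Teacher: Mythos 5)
Your proposal is correct and is exactly the argument the paper intends: replace $\tc_n(X)$ by $\cat(X^n)$ via Corollary \ref{Corollary1}, bound $\cat(X)-1$ by $\dim(X)/r$ using Theorem \ref{TheoremW}, and feed that into the product inequality of Theorem \ref{TheoremP}. Nothing is missing.
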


Lastly, let us mention a version of K\"{u}nneth's theorem for cohomology \cite{Hat}. \\

\begin{theorem}
\label{TheoremK}
If $X$ is a CW-complex and $H^{k}(X;R)$ is a finitely generated free $R-$module for all $k$, then $H^*(X^n;R)$ and $\left( H^*(X;R)\right)^{\otimes n}$ are isomorphic as rings. \\
\end{theorem}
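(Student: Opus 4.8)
The plan is to induct on $n$, the essential case being $n=2$: one shows that the cohomology cross product
\[
\times\colon H^*(X;R)\otimes_R H^*(Y;R)\longrightarrow H^*(X\times Y;R),\qquad \alpha\otimes\beta\longmapsto p_X^*(\alpha)\smile p_Y^*(\beta),
\]
is an isomorphism of rings, where $p_X,p_Y$ denote the two projections and the domain carries the graded tensor product multiplication $(\alpha\otimes\beta)\cdot(\alpha'\otimes\beta')=(-1)^{|\beta|\,|\alpha'|}(\alpha\smile\alpha')\otimes(\beta\smile\beta')$.

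That $\times$ is a ring homomorphism is formal: it follows from naturality of the cup product together with the graded-commutativity identity $p_X^*(\alpha)\smile p_Y^*(\beta)=(-1)^{|\alpha|\,|\beta|}p_Y^*(\beta)\smile p_X^*(\alpha)$, so the only work is tracking signs. That $\times$ is an isomorphism of graded $R$-modules is the classical Künneth theorem: the Eilenberg--Zilber theorem provides a chain homotopy equivalence $C_*(X\times Y;R)\simeq C_*(X;R)\otimes_R C_*(Y;R)$ compatible with the diagonal approximations computing cup products, and the algebraic Künneth theorem applied to this complex yields a natural short exact sequence whose $\operatorname{Tor}$ term vanishes since each $H^k(X;R)$ is free over $R$; finite generation of the $H^k(X;R)$ guarantees that the map occurring in that sequence is precisely $\times$ (rather than a map into some completion). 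All of this may be invoked from \cite{Hat}, since the theorem itself is quoted from there.

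For the inductive step I would write $X^n=X^{n-1}\times X$ and apply the case $n=2$ together with the inductive hypothesis, having first observed that the hypothesis is preserved, i.e.\ that $H^k(X^{n-1};R)$ is again finitely generated and free for all $k$, which holds because it is a finite direct sum of tensor products of finitely generated free $R$-modules. Associativity of the graded tensor product then identifies $\bigl(H^*(X;R)\bigr)^{\otimes n}$ with the iterated cross product on $X^{n}$, closing the induction. The main obstacle is not the module-level Künneth isomorphism, which is standard, but the sign bookkeeping needed to confirm that $\times$ respects the \emph{graded} tensor product ring structure, together with the routine verification that the finitely-generated-free hypothesis propagates to the powers $X^{n-1}$ so the induction can proceed.
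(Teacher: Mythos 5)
The paper does not prove this statement at all: it is quoted as a known version of the K\"{u}nneth theorem with a citation to \cite{Hat}, so there is no in-paper argument to compare against. Your outline is the standard proof and is essentially correct: reduce to $n=2$ by induction (checking, as you do, that the finitely-generated-free hypothesis propagates to $X^{n-1}$), show the cross product is a ring map by naturality of cup products plus sign bookkeeping, and invoke the module-level K\"{u}nneth isomorphism. One caveat on the route you chose for the module-level step: the Eilenberg--Zilber argument naturally identifies $H^*(X\times Y;R)$ with the cohomology of $\mathrm{Hom}(C_*(X)\otimes C_*(Y),R)$, and passing from there to $H^*(X;R)\otimes_R H^*(Y;R)$ requires the comparison map $\mathrm{Hom}(C_*(X),R)\otimes_R\mathrm{Hom}(C_*(Y),R)\to\mathrm{Hom}(C_*(X)\otimes C_*(Y),R)$ to be a quasi-isomorphism; for an arbitrary CW complex and arbitrary commutative $R$ this is exactly the delicate point (your parenthetical about ``completions''), and the hypothesis is on the cohomology, not on the cellular chain groups, so it does not hand you degreewise finiteness of the chains. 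Hatcher's own proof of this statement (his Theorem 3.16) sidesteps this by instead exhibiting $\bigoplus_i H^i(-;R)\otimes_R H^{n-i}(Y;R)$ as a cohomology theory on CW pairs --- freeness gives exactness, finite generation lets tensoring commute with the products appearing in the additivity axiom --- and comparing it with $H^n(-\times Y;R)$ via the cross product. Either way the result is standard; if you want your sketch to be airtight as written, you should either add the finite-type reduction needed for the Eilenberg--Zilber route or switch to the cohomology-theory comparison.
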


In our next examples, all spaces $X$ are CW-complexes and all $H^k(X;R)$ are finitely generated free $R-$modules. So the above theorem applies and we identify $\left( H^*(X;R)\right)^{\otimes n}$ with $H^*(X^n;R)$. To simplify notations, if $\alpha\in H^{\geq 1}(X;R)$, we let $\alpha\langle i\rangle$ denote $1\otimes 1\otimes \cdots \otimes 1\otimes \alpha\otimes 1\otimes \cdots\otimes 1$ in $\widetilde{H}^*(X^n;R)$, where $\alpha$ is in the $i$th position.\\

\subsection{Spheres}

Let $m\geq 1$, then the $m$-dimensional sphere $S^m$ is $(m-1)-$connected.  Thus by Corollary \ref{Corollary2}, $\tc_n(S^m)\leq n+1$. On the other hand, we have $H^*(S^m;\mathbb{Z})\cong \mathbb{Z}[\alpha]/(\alpha^2)$. So $$\prod_{i=1}^n\alpha \langle i\rangle\neq 0\in \widetilde{H}^*((S^m)^n;\mathbb{Z}).$$ Thus, $n<\nil_{\mathbb{Z}}((S^m)^n)\leq \tc_n(S^m)$ by Theorem \ref{TheoremN}. Therefore, $\tc_n(S^m)=n+1$, which is independent of the dimension of the sphere.\\

\subsection{Product of spheres}

The $k-$torus $(S^1)^k$  and the Cartesian product of $k$ 2-spheres $(S^2)^k$ model the configuration spaces of planar and spatial robotic arms with $k$ joints respectively \cite{Far1}. We first have $\cat(S^m)\leq 2$ (by Theorem \ref{TheoremW}). Then by Theorem \ref{TheoremP}, $\tc_n((S^m)^k)=\cat((S^m)^{nk})\leq nk+1$. Similar to the previous example, $nk<\nil_{\mathbb{Z}}((S^m)^{nk})\leq \tc_n((S^m)^k)$. Thus, we have $\tc_n((S^m)^k)=nk+1$, which does not depend on the dimension of the spheres, but does depend on the number of spheres.\\

\subsection{Surfaces}
Let $M^2$ be a closed and connected $2-$dimensional manifold, which is not $S^2$. So $M^2$ is either $\#_gT^2$, the connected sum of $g$ 2-tori, or $\#_gP^2$, the connected sum of $g$ projective planes, in both cases for some $g\geq 1$.\\

Since $M$ is only $0-$connected, we have  $\tc_n(M^2)\leq 2n+1$ by Corollary \ref{Corollary2}. \\

We know that $H^*(\#_gT^2;\mathbb{Z})$ is the noncommutative polynomial $\mathbb{Z} \langle \alpha_1,\beta_1,\alpha_2,\beta_2,\cdots,\alpha_g,\beta_g\rangle$ modulo the ideal  generated by the degree 2 polynomials: $\alpha_i\beta_j$ if $i\neq j$, $\alpha_k\beta_k+\beta_k\alpha_k$ for all  $k$ and $\alpha_i\alpha_j$, $\beta_i\beta_j$ for all $i,j$.
So for any $j=1,2,\cdots,g$, $$\prod_{i=1}^n\alpha_j\langle i\rangle\beta_j\langle i\rangle\neq 0\in \widetilde{H}^*((\#_gT^2)^n;\mathbb{Z}).$$ Thus by Theorem \ref{TheoremN}, $2n<\tc_n(\#_gT^2)$. \\

On the other hand, $H^*(\#_gP^2;\mathbb{Z}/2\mathbb{Z})$ is the polynomial ring $\mathbb{Z}/2\mathbb{Z}[\gamma_1,\gamma_2,\cdots,\gamma_g]$ modulo the ideal generated by the polynomials: $\gamma_i\gamma_j$ if $i\neq j$ and $\gamma_k^3$ for all $k$. So for any $j=1,2,\cdots,g$, $$\prod_{i=1}^n\gamma_j\langle i\rangle \gamma_j\langle i\rangle\neq 0\in \widetilde{H}^*((\#_gP^2)^n;\mathbb{Z}/2\mathbb{Z}).$$ By Theorem \ref{TheoremN}, $2n<\tc_n(\#_gP^2)$.\\

Therefore, if the closed and connected surface $M^2$ is not $S^2$, then $\tc_n(M^2)=2n+1$.\\

\subsection{Real projective spaces}
Since $H^*(\mathbb{R}P^m;\mathbb{Z}/2\mathbb{Z})\cong \mathbb{Z}/2\mathbb{Z}[\alpha]/(\alpha^{m+1})$, $$\prod_{i=1}^n\prod_{m}\alpha\langle i\rangle\neq 0\in \widetilde{H}^*((\mathbb{R}P^m)^n;\mathbb{Z}/2\mathbb{Z}).$$ Thus, $nm<\tc_n(\mathbb{R}P^m)$ by Theorem \ref{TheoremN}. On the other hand, notice that $\pi_1(\mathbb{R}P^m)\cong \mathbb{Z}/2\mathbb{Z}$, which is not zero. Corollary \ref{Corollary2} then implies that $\tc_N(\mathbb{R}P^m)\leq nm+1$. Therefore, $\tc_n(\mathbb{R}P^n)=nm+1$.\\

\subsection{Complex projective spaces}
We know $H^*(\mathbb{C}P^m;\mathbb{Z})\cong \mathbb{Z}[\beta]/(\beta^{m+1})$ and $\mathbb{C}P^m$ is $1-$connected. Similar to the above argument, we have $\tc_n(\mathbb{C}P^m)=nm+1$. 

\subsection{Configuration spaces of points in Euclidean spaces}

Let $m\geq 2$ and $k\geq 1$.  $F(\mathbb{R}^m,k)$, the configuration space of $k$ points in $\mathbb{R}^m$, is defined to be $$\{(x_1,x_2,\cdots,x_n)\in (\mathbb{R}^m)^k\big| x_i\neq x_j \mbox{ for }i\neq j\}.$$ It is $(m-2)-$connected and it strongly deformation retracts to a regular CW-complex $\mathcal{F}(m,k)$ of dimension $(m-1)(k-1)$ \cite{BZ}. Thus by the homotopy invariance of $\tc_n$ (Proposition \ref{PropositionInv}) and Corollary \ref{Corollary2}, $\tc_n(F(\mathbb{R}^m,k))=\tc_n(\mathcal{F}(m,k))\leq n(k-1)+1$.\\

On the other hand, the cohomology ring $H^*(F(\mathbb{R}^m,k);\mathbb{Z})$ is isomorphic to the graded commutative ring generated by degree $m-1$ elements $\alpha_{ab}$ for all $1\leq a<b\leq k$ modulo the ideal generated by $\alpha_{ab}^2$ for all $1\leq a<b\leq k$ and $\alpha_{ab}\alpha_{bc}-\alpha_{ab}\alpha_{ac}-\alpha_{ac}\alpha_{bc}$ for all $1\leq a <b<c\leq k$. See \cite{Arnold}, \cite{FCohen} and \cite{FH}. So $$\prod_{i=1}^n\prod_{a=1}^{k-1}\alpha_{a(a+1)}\langle i \rangle\neq 0\in \widetilde{H}^*((F(\mathbb{R}^m,k))^n;\mathbb{Z}),$$
from which we have $n(k-1)<\tc_n(F(\mathbb{R}^m,k))$. Therefore, $\tc_n(F(\mathbb{R}^m,k))=n(k-1)+1$.\\

Below is a table summarizing the previous results. It should be noted that the previous methods do not apply to any space. For example, the cohomology ring structure does not tell much about $\tc_n(LX)$. Computing $\tc_n(X)$ in general is more difficult and investigating them requires more technical homotopic, algebraic or analytic methods.\\

\begin{table}[!h]
\caption{$\tc_n$ for some familiar spaces.}
\begin{center}
\begin{tabular}{lllllll}
\hline
$X$           & $S^m$ & $(S^m)^k$ & $M^2\neq S^2$  & $\mathbb{R}P^m$ & $\mathbb{C}P^m$ & $F(\mathbb{R}^m,k),m\geq 2$\\
\hline
$\tc_n(X)$ & $n+1$ & $nk+1$       & $2n+1$ & $nm+1$                & $nm+1$                 & $n(k-1)+1$\\
\hline
\end{tabular}
\end{center}
\end{table}

%Acknowledgement
\textbf{Acknowledgement.} The author thanks his advisor, teachers and friends at Purdue University. He also thanks Amherst College for support during the summer of 2015. 

% Bibliography

\end{document}